\documentclass[11pt]{amsart}
\usepackage{amssymb, amsmath}
\title{On varieties where $\mathrm{CS}\mathfrak{X}$ implies $\mathfrak{X}\mathrm{T}$}
\author{O. Al-Raisi}
\address{O. Al-Raisi: Department of  Mathematics,  College of Science, Sultan Qaboos University, Muscat, Oman}
\email{omartalibmiran@gmail.com}

\author{M. Shahryari}
\address{M. Shahryari: Department of  Mathematics,  College of Science, Sultan Qaboos University, Muscat, Oman}
\email{m.ghalehlar@squ.edu.om}

\newtheorem{example}{Example}[section]
\newtheorem{corollary}{Corollary}[section]
\newtheorem{proposition}{Proposition}[section]

\newtheorem {theorem}{Theorem}[section]

\numberwithin{equation}{section}

\newcommand{\XT}{\mathfrak{X}\mathrm{T}}
\newcommand{\CSX}{\mathrm{CS}\mathfrak{X}}
\newcommand{\X}{\mathfrak{X}}
\newcommand{\idX}{\mathrm{id}(\mathfrak{X})}
\newcommand{\idnX}{\mathrm{id}_n(\mathfrak{X})}

\begin{document}

\maketitle
\begin{abstract}
In our previous work \cite{Omar-Shah2}, we initiated the study of $\CSX$- and $\XT$-groups associated with a fixed variety $\X$. A  group belongs to the former class if all of its maximal $\X$-subgroups are malnormal, and to the latter if any two $\X$-subgroups with nontrivial intersection generate an $\X$-subgroup. In general, $\CSX$ does not imply $\XT$, however as shown in \cite{Omar-Shah2}, some varieties do satisfy this implication. In this article, we provide additional examples of varieties for which $\CSX$ implies $\XT$.
\end{abstract}
\vspace{1cm}

{\bf AMS Subject Classification} 20E06, 20E26, 20F70.\\
{\bf Keywords} $\mathrm{CT}$-group, $\mathrm{CSA}$-group, conjugately separable subgroup, variety of groups, $\CSX$-group, $\XT$-group, marginal group, $\X$-nilpotent group.

%1111111111111111111111111111111111111111111111111111111111111111111111111
%%%%%%%%%%%%%%%%%%%%%%%%%%%%%%%%%%%%%%%%%%%%%%%%%%%%%%%%%%%%%%%%%%%%%%
\vspace{1cm}

In what follows, we use the notations of our previous work \cite{Omar-Shah2}. For an extensive description of the historical motivations, basic concepts, and literature review, we refer the reader to the first few pages of that article. Here, we only review a few basic definitions:

A subgroup $H$ of a given group $G$ is called {\em malnormal} if for every element $x\in G\setminus H$, we have  $H\cap H^x=1$. A group $G$ is called {\em conjugately separable abelian} ($\mathrm{CSA}$) if all maximal abelian subgroups of $G$ are malnormal.
Another class with very close connection to $\mathrm{CSA}$-groups is the class of $\mathrm{CT}$-groups ({\em commutative transitive groups}). A group is $\mathrm{CT}$ if commutativity is a transitive relation on the set of its non-identity elements. It is easy to see that every $\mathrm{CSA}$-group is a $\mathrm{CT}$-group as well. These two fundamental ideas can be generalized for any class of groups (especially for any variety of groups) by considering other laws instead of commutativity. Suppose that $\X$ is a class  of  groups. A group $G$  is an   $\XT$-group  if and only if, for any two $\X$-subgroups $A$ and $B$ of $G$, the assumption $A\cap B\neq 1$ implies that $\langle A, B\rangle$ is also an $\X$-subgroup of $G$. Similarly,  a group $G$ is a $\CSX$-group if  all of its maximal $\X$-subgroups  are malnormal. In \cite{Shah}, the special case where $\X=\mathfrak{N}_c$, the variety of nilpotent groups of nilpotency class not exceeding $c$ was examined. It contains $\mathrm{CSA}$- and $\mathrm{CT}$-groups as particular instances. Hence, \cite{Shah} addresses the study of general relations among the classes of $\CSX$- and $\XT$-groups, their characterizations, constructions, and universal axiomatization, as well as their connections to residual $A$-free groups, in the case when $\X$ is the variety of nilpotent groups of class at most $c$. Furthermore, many previous results are shown to remain valid for $\CSX$ and $\XT$-groups when $\X$ is the variety $\mathfrak{N}_c$. Especially, in this case, it is shown that the property $\CSX$ implies $\XT$, a property which is used in \cite{Omar-Shah} to provide a large classes of equational domains. However, this property happens to fail for other varieties of groups; for example, in \cite{Omar-Shah2} we proved that if $\X$ is the variety generated by  the dihedral group $D_{2p}$ ($p$ is an odd prime), then $\CSX$ is not included in $\XT$. Recognizing all varieties possessing this property might be hard, but as is proved in \cite{Omar-Shah2}, if $\X$ is a variety of nilpotent groups (more generally, any inductive class with nilpotent elements), then $\CSX$ implies $\XT$.

Our aim in this article is to provide more examples of varieties satisfying a {\em weaker version} of the required implication. These examples can be divided into two categories: First,  some varieties which are  quite natural generalizations of nilpotent varieties. They are called varieties of $\X$-nilpotent groups, and we have not found any similar notion in the literature, except for a partial case which had appeared in a Ph.D. thesis in 1971 (see \cite{Teague}). The second set includes all varieties $\X$ satisfying a law of the form
$$
[x, y_1^{p_1}, \ldots, y_k^{p_k}]\approx 1
$$
We show that any torsion-free $\CSX$-group belongs to the class $\XT$. One special case is the variety of $k$-Engel groups, where the result holds even without the restriction of being torsion-free.

For clarity, we review our notations. The subgroup generated by a subset $X$ of a group $G$ will be denoted by $\langle X\rangle$. A conjugate $a^x$ (or $H^x$) is $x^{-1}ax$ (similarly, $x^{-1}Hx$). A commutator $[x, y]$ is $x^{-1}y^{-1}xy$ and all simple commutators $[x_1, x_2, \ldots, x_{k+1}]$ are left aligned. The notation $[x,_k y]$ stands for $[x, y, \ldots, y]$ where $y$ occurs $k$ times.   A variety generated by a single identity $w\approx 1$ is denoted by  $\mathrm{Var}(w\approx 1)$. In most of the cases, we prefer to denote a law $w(x_1, \ldots, x_n)\approx 1$ simply by the element $w$  of the corresponding free group. Hence, whenever we say that $w$ is an identity of the variety $\X$, we mean that all elements of $\X$ satisfy the law $w(x_1, \ldots, x_n)\approx 1$. The set of all ($n$-variable) identities of a variety $\X$ will be denoted by $\idX$ (by $\idnX$).

\section{$\X$-Nilpotent Groups}

In this section, we introduce a natural generalization of nilpotent groups. The proofs of a few statements are  given here as the rest  are more or less similar to the usual arguments about the ordinary nilpotent groups. We begin by recalling a few important definitions. An element of the free group with a countable number of generators $x_1, x_2, x_3, \ldots$ is termed a word. Given a group $G$ and a word $w=w(x_1,\ldots,x_n)$, the verbal subgroup of $G$ corresponding to $w$ is
$$
w(G)=\langle w(g_1,\ldots, g_n): g_1, \ldots, g_n\in G\rangle.
$$
Thus, the subgroup $w(G)$ captures all values of the word $w$ in $G$. It is easy to see that $w(G)$ is a fully invariant subgroup of $G$. Moreover, it is evident that if $\X$ is a variety of groups then $G\in \X$ if and only if $w(G)=1$, for every identity $w$ of $\X$.
Dual to the concept of a verbal subgroup of $G$ corresponding to a word $w=w(x_1,\ldots,x_n)$, is the notion of the $w$-marginal subgroup $w^{\ast}(G)$ of $G$  which is defined as follows: The subgroup $w^{\ast}(G)$ consists of all elements $g\in G$ such that for all $g_1, \dots, g_n\in G$ and all $i\in \{1,\ldots,n\}$,
$$
w(g_1, \ldots, g_{i-1}, gg_i,g_{i+1},\ldots, g_n)=w(g_1, \ldots, g_i, \ldots,g_n).
$$
Two basic but important observations regarding the subgroup $w^{\ast}(G)$ are in order. First, it is easy to see that $w^{\ast}(G)$ is a characteristic subgroup of $G$. Secondly, if $\X$ is a variety of groups then $G\in \X$ if and only if $w^{\ast}(G)=G$, for every identity $w$ in $\X$.

We can define the notion of a verbal subgroup of a group $G$ corresponding to a set of words $W$ (rather than a single word) as
$$
W(G)=\prod_{w\in W} w(G).
$$
Similarly, the marginal subgroup of $G$ corresponding to $W$ is
$$
W^{\ast}(G)=\bigcap_{w\in W}w^*(G).
$$
These will be of particular interest for us when $W$ is the set of defining identities of a variety $\X$ of groups: Let $\X$ be a variety of groups and let $W$ be the set of defining identities of $\X$; let $G$ be a group. We shall write $\X(G)$ and $\X^{\ast}(G)$ for $W(G)$ and $W^{\ast}(G)$, respectively.

The upper $\X$-central series of $G$ is defined inductively: $\X^{\ast}_0(G)=1$ and, for $k\geq 1$, the group $\X^{\ast}_{k}(G)$ is the unique subgroup of $G$ satisfying
$$
\frac{\X^{\ast}_{k}(G)}{\X^{\ast}_{k-1}(G)}=\X^{\ast}(\frac{G}{\X^{\ast}_{k-1}(G)}).
$$
It is easy to see that each subgroup $\X^{\ast}_k(G)$ of $G$ is characteristic in $G$. We say that $G$ is $\X$-nilpotent of class at most $c$ if $\X^{\ast}_{c}(G)=G$ and we use the notation $\mathfrak{N}_c(\X)$ for the class of all $\X$-nilpotent groups of class at most $c$.

A normal series of a group $G$
$$
1=N_0\subseteq N_1\subseteq \cdots \subseteq N_m=G
$$
is said to be an $\X$-series of $G$ if
$$
\frac{N_k}{N_{k-1}}\subseteq \X^{\ast}(\frac{G}{N_{k-1}})
$$
for all $k\in\{1, \ldots, m\}$.\\

We verify that a group $G$ is $\X$-nilpotent if and only if it has an $\X$-series. Also, we  show that the class $\mathfrak{N}_c(\X)$ is a new variety and consequently, most properties of the ordinary nilpotent groups can be generalized to $\X$-nilpotent groups.

\begin{proposition}
A group $G$ is $\X$-nilpotent if and only if it has an $\X$-series.
\end{proposition}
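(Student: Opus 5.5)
The forward direction is immediate. If $\X^{\ast}_c(G)=G$, the upper $\X$-central series
$$1=\X^{\ast}_0(G)\subseteq \X^{\ast}_1(G)\subseteq\cdots\subseteq \X^{\ast}_c(G)=G$$
is a normal series, since each term is characteristic. By definition,
$$\X^{\ast}_k(G)/\X^{\ast}_{k-1}(G)=\X^{\ast}\bigl(G/\X^{\ast}_{k-1}(G)\bigr),$$
so it is an $\X$-series.

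For the converse, let $1=N_0\subseteq\cdots\subseteq N_m=G$ be an $\X$-series. I would prove by induction on $k$ that $N_k\subseteq \X^{\ast}_k(G)$. Taking $k=m$ then gives $\X^{\ast}_m(G)=G$. The case $k=0$ is trivial.

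The key tool is a lemma on marginal subgroups and quotients. Let $N\subseteq M$ be normal subgroups of $G$, and let $w$ be a word. If $gN\in w^{\ast}(G/N)$, then $gM\in w^{\ast}(G/M)$. The proof is direct: the marginal condition for $gN$ says that
$$w(g_1N,\ldots,gg_iN,\ldots)=w(g_1N,\ldots,g_iN,\ldots)$$
for all $g_j$. This is an equality modulo $N$ of $w(\ldots,gg_i,\ldots)$ and $w(\ldots,g_i,\ldots)$, hence also an equality modulo $M$, because $N\subseteq M$. Intersecting over all $w\in W$ gives the same statement for $\X^{\ast}$. So the image of $\X^{\ast}(G/N)$ in $G/M$ lies in $\X^{\ast}(G/M)$.

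For the inductive step, assume $N_{k-1}\subseteq \X^{\ast}_{k-1}(G)=:Z$, and take $g\in N_k$. The $\X$-series condition gives $gN_{k-1}\in \X^{\ast}(G/N_{k-1})$. The lemma with $N=N_{k-1}$ and $M=Z$ then gives $gZ\in \X^{\ast}(G/Z)=\X^{\ast}_k(G)/Z$. Hence $g\in \X^{\ast}_k(G)$, which completes the induction.

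The only point needing care is the quotient lemma, specifically that marginality transfers to a larger normal quotient. This holds because word values in $G/M$ are images of word values in $G/N$ under the natural map $G/N\to G/M$. A subtlety is that $W^{\ast}$ is an intersection over possibly infinitely many words, but the argument is word-by-word, so this causes no trouble. As a by-product, the induction shows that the class of $G$ is at most the length $m$ of any $\X$-series.
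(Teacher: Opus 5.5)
Your proposal is correct and follows essentially the same route as the paper: the forward direction uses the upper $\X$-central series itself, and the converse is an induction proving $N_k\subseteq \X^{\ast}_k(G)$. The key step in both is that an equality of word values modulo $N_{k-1}$ persists modulo the larger subgroup $\X^{\ast}_{k-1}(G)$; you isolate this as a separate quotient lemma, whereas the paper carries it out inline.
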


\begin{proof}
By the definition given above, the ascending $\X$-series of $G$ is an $\X$-series when $G$ is $\X$-nilpotent. For the converse, suppose that $G$ admits an $\X$-series:
$$
1=N_0\subseteq N_1\subseteq \cdots\subseteq N_m=G
$$
so that
$$
\frac{N_k}{N_{k-1}}\subseteq \X^*(\frac{G}{N_{k-1}}),
$$
for all $k\in\{1, \ldots,m\}$. We  prove by induction that $N_k\subseteq \X^*_{k}(G)$ for all $k\geq 0$. The inclusion $N_0\subseteq \X^{\ast}_0(G)$ holds trivially since $N_0=1$. Moreover, $N_1=N_1/N_0\subseteq \X^{\ast}(G/N_0)=\X^{\ast}(G)$.
Suppose now that $N_k\subseteq \X^{\ast}_k(G)$ holds for some $k\geq 1$ and let $x\in N_{k+1}$. Since
$$
\frac{N_{k+1}}{N_k}\subseteq \X^{\ast}(\frac{G}{N_k}),
$$
given any $w=w(x_1, \ldots,x_n)\in \idX$, any $i\in\{1, \ldots,n\}$, and any $g_1, \ldots, g_n\in G$, we have
$$
w(g_1, \ldots, g_{i-1}, xg_i, g_{i+1}, \ldots, g_n)N_k=w(g_1, \ldots, g_n)N_k,
$$
therefore,
$$
w(g_1, \ldots, g_n)^{-1}w(g_1, \ldots, g_{i-1}, xg_i, g_{i+1}, \ldots, g_n)\in N_k\subseteq \X^{\ast}_k(G).
$$
Passing to the quotient by $\X^{\ast}_k(G)$ yields
$$
w(g_1, \ldots, g_{i-1}, xg_i, g_{i+1}, \ldots, g_n)\X^{\ast}_k(G)=w(g_1, \ldots, g_n)\X^{\ast}_k(G).
$$
Since the last equality holds for all $w\in \idX$,  $i\in\{1, \ldots, n\}$, and all $g_1, \ldots, g_n\in G$, it follows that
$$
x\X^{\ast}_k(G)\in \X^{\ast}(\frac{G}{\X^{\ast}_k(G)})=\frac{\X^{\ast}_{k+1}(G)}{\X^{\ast}_k(G)},
$$
hence, $x\in \X^{\ast}_{k+1}(G)$. Consequently,  $N_{k+1}\subseteq \X^{\ast}_{k+1}(G)$ which completes the induction, showing that $N_k\subseteq \X^{\ast}_k(G)$, for all $k\geq 0$. In particular, $G=N_m\subseteq \X^{\ast}_m(G)$ from which we conclude that $G$ is $\X$-nilpotent.
\end{proof}

Now, we proceed to define the appropriate generalization of the descending central series.  Given an identity $w=w(x_1, \ldots, x_n)$ and $i\in\{1, \ldots, n\}$, define a new word $w^+_i$ with $n+1$ variables as follows
$$
w^+_i(x_1, \ldots, x_{n+1})=w(x_1, \ldots, x_n)^{-1}w(x_1, \ldots, x_{i-1},x_{n+1}x_i,x_{i+1}, \ldots, x_n).
$$
Furthermore, given subgroups $A$ and $B$ of a group $G$, define $\X^+(A,B)$ to be the subgroup of $G$ generated by $w^+_i(a_1, \ldots,a_n,b)$, where $w\in \idnX$, $i\in \{1, \ldots, n\}$, $a_1,\ldots,a_n\in A$, and $b\in B$. Consequently, it can be shown that a normal series
$$
1=N_0\subseteq N_1\subseteq N_2\subseteq \cdots \subseteq N_c=G
$$
is an $\X$-series if and only if $\X^+(G,N_j)\subseteq N_{j-1}$, for all $j\in\{1, \ldots, m\}$. An important example is the lower $\X$-series defined inductively as
$$
\Gamma^1_{\X}(G)=G, \quad \Gamma^{j+1}_{\X}(G)=\X^+(G, \Gamma^j_{\X}(G)).
$$
Then, a group $G$ is $\X$-nilpotent if and only if $\Gamma^{c+1}_{\X}(G)=1$, for some non-negative integer $c$. The class $\mathfrak{N}_c(\X)$ is indeed a variety as is shown below.

\begin{proposition}
For every variety $\X$, the class $\mathfrak{N}_c(\X)$ of $\X$-nilpotent groups is a new variety containing $\X$. 
\end{proposition}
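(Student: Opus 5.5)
We show that $\mathfrak{N}_c(\X)$ is defined by a set of laws, namely the iterated words built from the identities of $\X$ by repeated use of the operation $w\mapsto w^+_i$.

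\textbf{Step 1: the words.} Let $W_1=\idX$. Given $W_j$, let $W_{j+1}$ be the set of all words $u^+_i$ with $u\in W_j$ and $i$ ranging over the variables of $u$. After renaming the fresh variable so that $u^+_i$ is a genuine word in $x_1,x_2,\ldots$, each $W_j$ is a set of words. We claim that
$$
\mathfrak{N}_c(\X)=\mathrm{Var}(W_{c+1}).
$$
Since a class defined by laws is a variety, this proves the first assertion.

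\textbf{Step 2: the claim.} Both directions come from the following statement, proved by induction on $j$. For every group $G$,
$$
\Gamma^{j}_{\X}(G)=1 \iff u(g_1,\ldots,g_m)=1 \text{ for all } u\in W_j \text{ and all } g_1,\ldots,g_m\in G .
$$
We work through the upper series. The key lemma, by induction on $k$, is:
$$
g\in \X^*_k(G) \iff u^+_i(g_1,\ldots,g_m,g)=1 \text{ for all } u\in W_{k} \text{ (built as above), all } i, \text{ all } g_1,\ldots,g_m .
$$
\begin{itemize}
\item For $k=1$ this is exactly the definition of $\X^*(G)$: $w^+_i(g_1,\ldots,g_n,g)=1$ says that multiplying the $i$-th argument by $g$ does not change the value of $w$.
\item For the inductive step, $g\in \X^*_{k+1}(G)$ means that $w^+_i(\bar g_1,\ldots,\bar g_n,\bar g)=1$ in $G/\X^*_k(G)$ for all $w\in\idX$. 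That is, each element $w^+_i(g_1,\ldots,g_n,g)$ lies in $\X^*_k(G)$.
\item Apply the inductive hypothesis to these elements. Substituting $w^+_i(\dots)$ into the marginal condition for $\X^*_k$ produces precisely the words of the next level. So it is more convenient to build $W_{j+1}$ by substituting the words of $W_j$ into the first-level operation, that is, to nest from the inside. I will define $W_j$ in whichever nesting order makes this substitution literal.
\end{itemize}
Then $G=\X^*_c(G)$ holds iff every element is $c$-marginal. Equivalently, all the nested words of depth $c+1$ vanish identically on $G$. This gives the claim.

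\textbf{Step 3: containing $\X$.} If $G\in\X$, then $\X^*(G)=G$, as recalled earlier. Hence $\X^*_1(G)=G$, so $G\in\mathfrak{N}_1(\X)\subseteq\mathfrak{N}_c(\X)$ for every $c\ge1$. The inclusion $\mathfrak{N}_1(\X)\subseteq\mathfrak{N}_c(\X)$ holds because the upper series is increasing.

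\textbf{Alternative route (Birkhoff).} One could instead check closure under subgroups, quotients and cartesian products using the $\X$-series characterization in the previous proposition.
\begin{itemize}
\item \emph{Products.} Marginal subgroups behave coordinatewise, since $\X^*(\prod G_\alpha)=\prod\X^*(G_\alpha)$. The same then holds for the upper series.
\item \emph{Subgroups.} The intersections $N_k\cap H$ form an $\X$-series of $H$, because marginality is inherited by restricting the arguments to $H$.
\item \emph{Quotients.} Quotients are best handled by the lower series: $\X^+$ is defined by word values, so $\Gamma^j_{\X}(G/N)$ is the image of $\Gamma^j_{\X}(G)$. This uses the criterion $\Gamma^{c+1}_{\X}(G)=1$ stated before the proposition.
\end{itemize}

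\textbf{Main obstacle.} In the word-based route, the main difficulty is the bookkeeping in the inductive lemma. One has to check that substituting a word value into the $k$-marginality condition yields a single word in fresh variables, with the nesting done correctly. In the Birkhoff route, the subtle point is quotient closure via the upper series. That is why I would use the lower series there, and its equivalence with the upper-series definition is taken from the paper's preceding remark.
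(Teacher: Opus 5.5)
\textbf{Gap in the primary (word-based) route.} This route fails as stated, and the problem is more than bookkeeping. The recursion $W_{j+1}=\{u^+_i : u\in W_j\}$ never changes the variety. For any set of words $W$, a group satisfies all the laws $u^+_i$ ($u\in W$, all $i$) iff $W^*(G)=G$. In that case
$u(g_1,\ldots,g_m)=u(1,g_2,\ldots,g_m)=\cdots=u(1,\ldots,1)=1$,
so $G\in\mathrm{Var}(W)$; the converse is trivial. Hence $\mathrm{Var}(W_{c+1})=\X$ for every $c$, and the same objection applies to your claim about $\Gamma^j_{\X}$.

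A concrete counterexample: take $\X=\mathrm{Ab}$. Then $W_2$ contains $[x_1,x_2]^{-1}[x_3x_1,x_2]$. Marginality in the $x_3$-slot, with $x_1=x_3=1$, already forces an element to be central. So your key lemma fails at $k=2$, since it would give $Z_2(G)\subseteq Z(G)$. Likewise $\mathrm{Var}(W_3)=\mathrm{Ab}\neq\mathfrak{N}_2=\mathfrak{N}_2(\mathrm{Ab})$.

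You noticed the mismatch, but ``I will define $W_j$ in whichever nesting order makes this substitution literal'' postpones exactly the content of the proof. What works is to nest in the distinguished (marginal) variable, and to phrase the induction with test words rather than as $u^+_i$ with $u\in W_k$:
\begin{itemize}
\item Let $\mathcal{U}_1=\{w^+_i : w\in\idX\}$, viewed as words in $x_1,\ldots,x_n,z$.
\item Let $\mathcal{U}_{k+1}$ consist of all $V(y_1,\ldots,y_m,U(x_1,\ldots,x_n,z))$ with $V\in\mathcal{U}_k$, $U\in\mathcal{U}_1$, using fresh variables.
\end{itemize}
We have $g\in\X^*_{k+1}(G)$ iff $U(g_1,\ldots,g_n,g)\in\X^*_k(G)$ for all $U\in\mathcal{U}_1$ and all $g_i$. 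Induction then gives $g\in\X^*_k(G)$ iff every $V\in\mathcal{U}_k$ vanishes whenever $z=g$. So $\mathfrak{N}_c(\X)$ is defined by the laws $\mathcal{U}_c$; this is the pattern of the words $w_c$ in Example 1.1(3). Carried out this way, it would be a genuinely different proof from the paper's, and it would also yield explicit defining laws.

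\textbf{The alternative route.} Your Birkhoff argument is sound and is essentially the paper's proof.
\begin{itemize}
\item Subgroups: you intersect an $\X$-series with the subgroup, exactly as the paper does.
\item Cartesian products: the paper takes the coordinatewise product of $\X$-series. This is equivalent to your identity $\X^*(\prod G_\alpha)=\prod\X^*(G_\alpha)$ propagated up the upper series.
\item Quotients: the paper does not go through the lower series. The image $1=N_0H/H\subseteq\cdots\subseteq N_cH/H=G/H$ of any $\X$-series is again an $\X$-series, because $w^+_i(a_1H,\ldots,a_nH,yH)=w^+_i(a_1,\ldots,a_n,y)H$.
\end{itemize}
This is the same computation you apply to $\Gamma^j_{\X}$. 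Applied to an arbitrary $\X$-series, it removes the ``subtle point'' you anticipated and avoids the detour through the upper/lower equivalence. Your Step 3 (indeed $\mathfrak{N}_1(\X)=\X$) is correct, and it supplies the containment that the paper's proof leaves implicit.
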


\begin{proof}
We prove that $\mathfrak{N}_c(\X)$ is closed under subgroups, quotients, and Cartesian products. \\

(i) Let $G\in \mathfrak{N}_c(\X) $ and let $A$ be a subgroup of $G$. We know that $G$ admits an $\X$-series
$$
1=N_0\subseteq N_1\subseteq \cdots \subseteq N_c=G.
$$
Hence, the following series of $A$ is normal as well
$$
1=N_0\cap A\subseteq N_1\cap A\subseteq \cdots \subseteq N_c\cap A=A.
$$
It is enough to show that $\X^+(A, N_j\cap A)\subseteq N_{j-1}\cap A$, for all $j\geq 1$. Note that we already have $\X^+(G, N_j)\subseteq N_{j-1}$, for every $j\geq 1$ and consequently, if $w\in \idnX$, $i\in \{ 1, \ldots, n\}$,  $a_1, \ldots, a_n\in A$ and $y\in N_j\cap A$, then 
\begin{eqnarray*}
w^+_i(a_1, \ldots, a_n, y)&=&w(a_1, \ldots, a_n)^{-1}w(a_1, \ldots, a_{i-1}, ya_i, a_{i+1}, \ldots, a_n)\\
                          &\in& \X^+(G, N_j)\subseteq N_{j-1},
\end{eqnarray*}
and at the same time it belongs to $A$. Therefore, $\X^+(A, N_j\cap A)\subseteq N_{j-1}\cap A$ and $A$ is $\X$-nilpotent of class at most $c$.\\

(ii) Let $G\in \mathfrak{N}_c(\X)$ and consider a normal subgroup  $H\unlhd G$. Again, there exists an $\X$-series of $G$, say
$$
1=N_0\subseteq N_1\subseteq \cdots \subseteq N_c=G,
$$
and consequently, we can consider the series 
$$
1=\frac{N_0H}{H}\subseteq \frac{N_1H}{H}\subseteq \cdots \subseteq \frac{N_cH}{H}=\frac{G}{H}.
$$
Suppose $w\in \idnX$, $i\in \{1, \ldots, n\}$, $a_1H, \ldots, a_nH\in G/H$, and $y\in N_j$. Then 
$$
w_i^+(a_1H, \ldots, a_nH, yH)=w_i^+(a_1, \ldots, a_n, y)H.
$$
But, we know that $w_i^+(a_1, \ldots, a_n, y)\in \X^+(G, N_j)\subseteq N_{j-1}$. So
$$
\X^+(\frac{G}{H}, \frac{N_jH}{H})\subseteq \frac{N_{j-1}H}{H},
$$
showing that $G/H$ belongs to $\mathfrak{N}_c(\X)$. \\

(iii) Finally, let $\{G_{\alpha}:\alpha \in \Lambda\}$ be a family of groups each of which belongs to the class $\mathfrak{N}_c(\X)$; let $G=\prod_{\alpha}G_{\alpha}$ be the  product of this family. Each $G_{\alpha}$ has an $\X$-series of length at most $c$:
$$
1=N_{\alpha}^0\subseteq N_{\alpha}^1\subseteq \cdots \subseteq N_{\alpha}^c=G_{\alpha}.
$$
Now, for each $j\in\{1, \ldots, c\}$, define
$$
M_j=\prod_{\alpha}N_{\alpha}^j
$$
and observe that each $M_j$ is normal in $G$ and that $M_{j-1}\subseteq M_j$, for all $j$. Thus, we have a normal series of $G$:
$$
1=M_0\subseteq M_1\subseteq \cdots \subseteq M_c=G.
$$
It remains to show that this is an $\X$-series of $G$. Again, suppose $w\in \idnX$, $i\in \{1, \ldots, n\}$, $(a_{\alpha}^1)_{\alpha}, \ldots, (a_{\alpha}^n)_{\alpha}\in G$, and $(y_{\alpha})_{\alpha}\in M_j$. Then, we have
$$
w_i^+((a_{\alpha}^1)_{\alpha}, \ldots, (a_{\alpha}^n)_{\alpha}, (y_{\alpha})_{\alpha})=(w_i^+(a_{\alpha}^1, \ldots, a_{\alpha}^n))_{\alpha},
$$
and as $w_i^+(a_{\alpha}^1, \ldots, a_{\alpha}^n)\in N_{\alpha}^{j-1}$, we conclude 
$$
w_i^+((a_{\alpha}^1)_{\alpha}, \ldots, (a_{\alpha}^n)_{\alpha}, (y_{\alpha})_{\alpha})\in M_{j-1}.
$$
\end{proof}

\begin{example}
One may confirm the following using the definition, although the verifications might not be straightforward. 
\begin{enumerate}
\item If $\X=\mathfrak{N}_k$, the variety of nilpotent groups of the class at most $k$, then $\mathfrak{N}_c(\X)=\mathfrak{N}_{ck}$, the variety of nilpotent groups of the class at most $ck$.
\item If $\X=\mathrm{Ab}$ is the variety of all abelian groups, then $\mathfrak{N}_c(\X)=\mathfrak{N}_c$, the variety of ordinary nilpotent groups of class at most $c$. Also, suppose $\X=\mathrm{Ab}_m$, the variety of all abelian groups of exponent $m$. Then $\mathfrak{N}_c(\X)$ is the variety of all groups  $G\in \mathfrak{N}_c$ such that $x^{m^j}\in Z_{c-j}(G)$, for all $j\in \{ 1, \ldots, c\}$. Here $Z_{c-j}(G)$ denotes the $(c-j)$-th term of the ordinary upper central series of $G$.
\item Let $\X$ be the Burnside variety $\mathfrak{B}_m=\mathrm{Var}(x^m\approx 1)$. Define the words $w_1=x_2^{-m}(x_1x_2)^m$ and $w_{j+1}=x_{j+1}^{-m}(w_jx_{j+1})^m$, recursively. Then
$$
\mathfrak{N}_c(\X)=\mathrm{Var}(w_c\approx 1).
$$
\end{enumerate}
\end{example}

 We can develop a suitable theory of $\X$-solvable groups as well. A group $G$ is $\X$-solvable if and only if it has a subnormal series, the quotients in which belong to $\X$. Equivalently,  $G$ is $\X$-solvable if and only if $\mathrm{D}^{d+1}_{\X}(G)=1$, for some $d$, where the $\X$-derived series is defined inductively as
$$
\mathrm{D}^0_{\X}(G)=G, \quad \mathrm{D}^{j+1}_{\X}(G)=\X^+(\mathrm{D}^j_{\X}(G), \mathrm{D}^j_{\X}(G)).
$$
The class of all groups satisfying $\mathrm{D}^{d+1}_{\X}(G)=1$ is a variety containing $\mathfrak{N}_{2^d}(\X)$. We denote this variety by $\mathfrak{A}_d(\X)$.

Most properties of ordinary solvable groups and their relations to nilpotent groups can be proved in this general context.  We close this section by proving three general facts on $\CSX$-groups. In the case of ordinary $\mathrm{CSA}$-groups, these results are well-known (see \cite{Omar-Shah2} and \cite{Shah}).

\begin{proposition}
Let $\X$ be a variety.
\begin{enumerate}
\item Suppose $G$ is a $\CSX$-group. Then any $\X$-solvable subgroup of $G$ belongs to $\X$.
\item If $\X\subseteq \mathfrak{Y}$ where $\mathfrak{Y}$ is a variety contained in $\mathfrak{A}_d(\X)$, then $\CSX\subseteq \mathrm{CS}\mathfrak{Y}$.
\item Let $\overline{\X}=\mathfrak{N}_c(\X)$. Then $\CSX$ implies $\mathrm{CS}\overline{\X}$.
\end{enumerate}
\end{proposition}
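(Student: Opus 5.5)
The plan is to prove (1) first, and then derive (2) and (3) from it formally.

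\emph{Step 1 (key lemma).} Let $G$ be a $\CSX$-group, $K\leq G$, and $1\neq N\unlhd K$ with $N\in\X$. I claim $K\in\X$.
\begin{itemize}
\item Since a variety is closed under unions of chains (identities are checked on finitely many elements), Zorn's lemma gives a maximal $\X$-subgroup $M$ of $G$ with $N\subseteq M$.
\item For $k\in K$ we have $N=N^k\subseteq M\cap M^k$, so $M\cap M^k\neq 1$.
\item Malnormality of $M$ then forces $k\in M$. Hence $K\subseteq M$, and so $K\in\X$.
\end{itemize}

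\emph{Step 2 (proof of (1)).} Let $S\leq G$ be $\X$-solvable, so $\mathrm{D}^{d+1}_{\X}(S)=1$ for some $d$. I need three facts about the $\X$-derived series.
\begin{itemize}
\item Each term $\mathrm{D}^{j+1}_{\X}(S)=\X^+(D,D)$, with $D=\mathrm{D}^j_{\X}(S)$, is a verbal subgroup of $D$ for the words $w_i^+$. Hence it is fully invariant, in particular normal, in $D$.
\item For any group $A$, $\X^+(A,A)=1$ implies $A\in\X$. Indeed, if all $w_i^+$ vanish on $A$, then $w(a_1,\dots,a_n)$ does not change when any argument is left-multiplied by an element of $A$. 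Moving each argument to $1$ one at a time gives $w(a_1,\dots,a_n)=w(1,\dots,1)=1$.
\item In particular $\mathrm{D}^d_{\X}(S)\in\X$.
\end{itemize}
Now I argue by descending induction on $j$ that $\mathrm{D}^j_{\X}(S)\in\X$. Suppose $\mathrm{D}^{j+1}_{\X}(S)\in\X$.
\begin{itemize}
\item If $\mathrm{D}^{j+1}_{\X}(S)\neq1$, the lemma applied with $K=\mathrm{D}^j_{\X}(S)$ gives $\mathrm{D}^j_{\X}(S)\in\X$.
\item If $\mathrm{D}^{j+1}_{\X}(S)=1$, then $\mathrm{D}^j_{\X}(S)\in\X$ by the previous bullet list.
\end{itemize}
At $j=0$ this gives $S\in\X$. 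The only delicate points are the normality and verbal-subgroup facts above; I expect them to be routine.

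\emph{Step 3 (proof of (2)).} Let $G\in\CSX$ and let $M$ be a maximal $\mathfrak{Y}$-subgroup of $G$.
\begin{itemize}
\item Since $\mathfrak{Y}\subseteq\mathfrak{A}_d(\X)$, $M$ is $\X$-solvable, so $M\in\X$ by (1).
\item Take a maximal $\X$-subgroup $M'\supseteq M$. Then $M'\in\X\subseteq\mathfrak{Y}$, so maximality of $M$ in $\mathfrak{Y}$ gives $M=M'$.
\item Thus $M$ is a maximal $\X$-subgroup, hence malnormal, and $G\in\mathrm{CS}\mathfrak{Y}$.
\end{itemize}

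\emph{Step 4 (proof of (3)).} Choose $d$ with $2^d\geq c$. Then $\X\subseteq\mathfrak{N}_c(\X)\subseteq\mathfrak{N}_{2^d}(\X)\subseteq\mathfrak{A}_d(\X)$, using the inclusion of $\mathfrak{N}_{2^d}(\X)$ in $\mathfrak{A}_d(\X)$ stated in the text. Now apply (2) with $\mathfrak{Y}=\overline{\X}$.

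The main obstacle is the lemma in Step 1; everything else is a formal consequence of it.
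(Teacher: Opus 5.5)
Your Steps 1--3 are correct and follow the paper's route: you trap a nontrivial normal $\X$-subgroup inside a maximal $\X$-subgroup and use malnormality. In places your version is tidier than the paper's. You use maximal $\X$-subgroups of $G$ itself, so you do not need that subgroups of $\CSX$-groups are $\CSX$. Your descending induction only needs $\mathrm{D}^{j+1}_{\X}(S)\unlhd\mathrm{D}^{j}_{\X}(S)$. And you check explicitly that a maximal $\mathfrak{Y}$-subgroup lying in $\X$ is a maximal $\X$-subgroup, which the paper leaves implicit.

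The gap is in Step 4. The inclusion $\mathfrak{N}_{2^d}(\X)\subseteq\mathfrak{A}_d(\X)$, although asserted in the text, is false in general. Take $\X=\mathfrak{B}_2$ and $G=\mathbb{Z}/16\mathbb{Z}$, written additively.
\begin{itemize}
\item Every identity of $\mathfrak{B}_2$ has even exponent sum in each variable, and $x^2$ is one of them. So inside an abelian group $\X^+(A,B)=2B$.
\item Hence $\Gamma^{j}_{\X}(G)=2^{j-1}G$, so $\Gamma^5_{\X}(G)=0$ and $G\in\mathfrak{N}_4(\X)$. This is consistent with Example 1.1(2).
\item Likewise $\mathrm{D}^{j}_{\X}(G)=2^{j}G$, so $\mathrm{D}^{3}_{\X}(G)=8G\neq 0$ and $G\notin\mathfrak{A}_2(\X)$.
\end{itemize}
So the specific $d$ with $2^d\geq c$ that you pick does not work.

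What you actually need is only $\overline{\X}\subseteq\mathfrak{A}_d(\X)$ for \emph{some} $d$, and $d=c-1$ works. Let $1=N_0\subseteq\cdots\subseteq N_c=G$ be an $\X$-series.
\begin{itemize}
\item Each factor $N_k/N_{k-1}\subseteq\X^*(G/N_{k-1})$ lies in $\X$, since marginal arguments of an identity can be replaced by $1$.
\item If $N\unlhd K$ and $K/N\in\X$, then $\X^+(K,K)\subseteq N$.
\item $\X^+$ is monotone in both arguments.
\end{itemize}
Induction then gives $\mathrm{D}^{j}_{\X}(G)\subseteq N_{c-j}$, hence $\mathrm{D}^{c}_{\X}(G)=1$ and $\mathfrak{N}_c(\X)\subseteq\mathfrak{A}_{c-1}(\X)$.

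Alternatively, your Step 2 induction runs essentially verbatim along any subnormal series whose factors lie in $\X$. So you could apply it directly to an $\X$-series and conclude that every $\overline{\X}$-subgroup of a $\CSX$-group is in $\X$.

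With either replacement your argument is complete. The paper's one-line proof of (3) does not say which $d$ it uses, so it needs the same justification.
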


\begin{proof}
To prove the first statement, let $H\neq 1$ be an $\X$-solvable subgroup of $G$. As is shown in \cite{Omar-Shah2}, every subgroup of $\CSX$-group is $\CSX$. So $H\in \CSX$. Let $d$ be the smallest number such that $\mathrm{D}^{d+1}_{\X}(H)=1$ and suppose $A=\mathrm{D}^{d}_{\X}(H)$. Then $A$ is a non-trivial normal subgroup of $H$. Note that $\X^+(A, A)=1$ which means that for all $w\in \idnX$, all $i\in \{1, \ldots, n\}$, and any $a_1, \ldots, a_n, b\in A$, we have $w_i^+(a_1, \ldots, a_n, b)=1$ and hence 
$$
w(a_1, a_2, \ldots, a_n)=w(a_1, \ldots, ba_i, \ldots, a_n).
$$
Consequently, repeated application of this fact for $i=1, \ldots, n$ and $b=a_i$, implies 
\begin{eqnarray*}
w(a_1, \ldots, a_n)&=&w(a_11, a_2, \ldots, a_n)\\
                   &=&w(1, a_2, \ldots, a_n)\\
                   &=&w(1, a_21, \ldots, a_n)\\
                   &=&w(1, 1, \ldots, a_n)\\
                   &\vdots&\ \\
                   &=&w(1, 1, \ldots, 1)\\
                   &=&1.
\end{eqnarray*}
This shows that $A\in \X$. Let $M$ be a maximal $\X$-subgroup of $H$ which contains $A$. Then $H$ is not malnormal as for every $h\in H$, we have $A\subseteq M\cap M^h$, except in the case when $M=H$. Therefore, $H\in \X$.

To prove the second statement, suppose that $G$ is $\CSX$ and $H$ is a maximal $\mathfrak{Y}$-subgroup of $G$.  Note that $H$ is $\X$-solvable and therefore, as we just proved, it belongs to $\X$. This implies that $H$ is malnormal, proving that $G\in \mathrm{CS}\mathfrak{Y}$. Finally, applying this result for the special case of $\mathfrak{Y}=\overline{\X}$, we obtain the third statement. 
\end{proof}

\section{Some $\X$-nilpotent varieties satisfying a weak form of $\CSX\to \XT$}
A weaker version of the property of $\X$-transitivity can be defined as follows: Let $\X$ and $\mathfrak{Y}$ be varieties. We say that a group $G$ is $\X$-transitive relative to $\mathfrak{Y}$ if and only if for any two $\X$-subgroups $A$ and $B$ in $G$, the assumption $A\cap B\not\in \mathfrak{Y}$ implies $\langle A, B\rangle\in \X$. We denote the class of all such groups by $\XT_{\mathfrak{Y}}$. As an example, Proposition 3.2 of \cite{Omar-Shah2} shows that if $\X$ is the variety of all metabelian groups and $\mathfrak{Y}$ is the variety of abelian groups, then $\CSX$ implies $\XT_{\mathfrak{Y}}$. In this section, we provide more examples. 

\begin{theorem}
Let $\X$ be a variety which has an identity of the form
$$
[v(x_1, \ldots, x_n), y]\approx 1.
$$
Suppose $\mathfrak{Y}=\mathrm{Var}(v\approx 1)$ and $\overline{\X}=\mathfrak{N}_c(\X)$. Then $\mathrm{CS}\overline{\X}$ implies $\overline{\X}\mathrm{T}_{\mathfrak{Y}}$.
\end{theorem}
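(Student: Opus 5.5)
Let $G$ be a $\mathrm{CS}\overline{\X}$-group and let $A,B$ be $\overline{\X}$-subgroups of $G$ with $A\cap B\notin\mathfrak{Y}$. The plan is to locate a nontrivial element of $A\cap B$ that is marginal (hence central-like) in both $A$ and $B$. Then, via malnormality, $A$ and $B$ are forced into the same maximal $\overline{\X}$-subgroup. First, enlarge $A$ and $B$ to maximal $\overline{\X}$-subgroups $M\supseteq A$ and $N\supseteq B$ (Zorn's lemma applies because varieties are closed under directed unions). It is then enough to show $M=N$, since this gives $\langle A,B\rangle\subseteq M\in\overline{\X}$. By hypothesis both $M$ and $N$ are malnormal.

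Since $A\cap B\notin\mathfrak{Y}$, there are $a_1,\ldots,a_n\in A\cap B$ with $z:=v(a_1,\ldots,a_n)\neq 1$. The key observation is that in any $\X$-group $H$, every value of $v$ is central, because $[v,y]\approx 1$ holds in $\X$. I want to push this into $\overline{\X}=\mathfrak{N}_c(\X)$. Look at the last nontrivial term of the upper $\X$-central series of $M$, or dually the lower $\X$-series $\Gamma^c_{\X}(M)$, whose image under $\X^+(M,\cdot)$ is trivial. Elements of $\Gamma^c_{\X}(M)$ are $\X$-marginal in $M$. Then $[v,y]$ marginality gives $[v(m_1,\ldots,m_n),y]=[v(m_1,\ldots,m_n),ty]$ for $t\in\Gamma^c_\X(M)$, that is, $v$-values commute with $\Gamma^c_\X(M)$. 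This alone does not produce a common element of $M\cap N$, so I would instead run an induction on $c$.

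\emph{Case $c=1$:} $M,N\in\X$, so $z$ is central in $M$ and in $N$. Take $x\in N$. Then $z=z^x\in M\cap M^x$, and $z\ne1$, so malnormality of $M$ gives $x\in M$. Hence $N\subseteq M$, and maximality gives $M=N$.

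\emph{General case:} the aim is a nontrivial element $u\in M\cap N$ that is normalized, or centralized, by enough of $N$ to force $N\subseteq M$. A convenient tool is to conjugate by $x\in N$ and note that $u^x\in M\cap M^x$ whenever $u^x\in M$. The natural candidate is an iterated element: the subgroup $K=\langle v(a_1,\ldots,a_n)^{N}\rangle$ generated by conjugates. Here I would use Proposition 1.3 (part 1): since the varieties involved are $\X$-solvable, the malnormality forces $\X$-solvable subgroups of $G$ to lie in $\X$, provided $G$ is also $\CSX$.

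The main obstacle is exactly this: we only know $\mathrm{CS}\overline{\X}$, not $\CSX$. My first attempt is to show directly that in a $\mathrm{CS}\overline{\X}$-group every $\overline{\X}$-subgroup is already in $\X$, reusing the argument of Proposition 1.3(1). For a nontrivial $\overline{\X}$-subgroup $H$ (itself $\mathrm{CS}\overline{\X}$), the term $\Gamma^c_\X(H)$, if nontrivial, is a normal $\X$-subgroup. This argument only yields $H\in\overline{\X}$, so it is circular. Instead I would exploit the element $z$ and its normal closure $Z=\langle z^M\rangle$ in $M$. Applying the marginal-series description, one would try to prove that some nontrivial element of $Z\cap N$ lies in the centre of $M$ and of $N$. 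Then the $c=1$ argument concludes $M=N$. If that fails, the fallback is the step-by-step descent: show $M\cap N$ contains a nontrivial normal subgroup of $N$, using that $\Gamma_\X$-terms of $N$ meet $M$ nontrivially. Then malnormality of $M$ yields $N\subseteq M$.
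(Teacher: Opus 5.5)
Your case $c=1$ is correct, but for $c\ge 2$ the proposal contains no proof. It lists candidate strategies and carries none of them out.

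The route through Proposition 1.3(1) is unavailable, since it needs $\CSX$. The substitute you try, that every $\overline{\X}$-subgroup of a $\mathrm{CS}\overline{\X}$-group lies in $\X$, is false. Any group in $\overline{\X}\setminus\X$ is trivially $\mathrm{CS}\overline{\X}$, for instance a non-abelian nilpotent group of class $2$ when $\X=\mathrm{Ab}$.

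Your remaining suggestions come with no argument. These are a nontrivial element of $\langle z^M\rangle\cap N$ central in both $M$ and $N$, and a nontrivial normal subgroup of $N$ inside $M\cap N$. Centrality is also more than the hypothesis gives once $c>1$: the law $[v,y]\approx 1$ only tells you that elements of $\X^{\ast}_j(N)$ commute with $v$-values of $N$ modulo $\X^{\ast}_{j-1}(N)$.

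The missing idea is that this relative statement already suffices if you climb the upper $\X$-central series of $N$ (or of $B$). Your observation about marginal elements is the right seed, but you applied it to $M$ instead of $N$.

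If $t\in\X^{\ast}_j(N)$, then $t\X^{\ast}_{j-1}(N)$ is marginal in $N/\X^{\ast}_{j-1}(N)$ for the law $[v(x_1,\ldots,x_n),y]$. Evaluating at $y=1$ gives $[v(n_1,\ldots,n_n),t]\in\X^{\ast}_{j-1}(N)$ for all $n_i\in N$.

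Now prove $\X^{\ast}_j(N)\subseteq M$ by induction on $j$. Assume $\X^{\ast}_{j-1}(N)\subseteq M$ and take $t\in\X^{\ast}_j(N)$. Then $z^t=z[z,t]\in M$, and $z^t\in M^t$ because $z\in M$. So $1\neq z^t\in M\cap M^t$, and malnormality gives $t\in M$. At $j=c$ this yields $N\subseteq M$.

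This is essentially the paper's proof, phrased there as a minimal counterexample. Take the least $j$ with $\X^{\ast}_j(H_2)\not\subseteq M$. Use the lemma that $\X^+(A,B)\subseteq A$ forces $v(A)^b\subseteq A$ for all $b\in B$, applied with $A=H_2\cap M$ and $B=\X^{\ast}_j(H_2)$. Then any $t\in\X^{\ast}_j(H_2)\setminus M$ gives $1\neq v(H_2\cap M)^t\subseteq M\cap M^t$, a contradiction.
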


\begin{proof}
First, we prove that if a group  $G$ contains two subgroups  $A, B\leq G$  such that $\X^+(A, B)\subseteq A$, then we have  $[v(A), B]\subseteq A$. To this end, suppose $w(x_1, \ldots, x_n, y)=[v(x_1, \ldots, x_n), y]$. The assumption $\X^+(A, B)\subseteq A$ implies that for all $i\in \{1, \ldots, n+1\}$, all $a_1, \ldots, a_{n+1}\in A$, and every $b\in B$, we have
$$
w_i^+(a_1, \ldots, a_{n+1}, b)\in A.
$$
Especially, for the case  $i=n+1$, we obtain
$$
w(a_1, \ldots, a_n, a_{n+1})^{-1}w(a_1, \ldots, a_n, ba_{n+1})\in A
$$
which means that $[v(a_1, \ldots, a_n), ba_{n+1}]\in A$. Consequently,
$$
v(a_1, \ldots, a_n)^{-1}a_{n+1}^{-1}b^{-1}v(a_1, \ldots, a_n)ba_{n+1}\in A
$$
and therefore, $[v(A), B]\subseteq A$.

Now, let $G$ be a $\mathrm{CS}\overline{\X}$-group and $H_1, H_2\leq G$ be two $\overline{\X}$-subgroups with $v(H_1\cap H_2)\neq 1$. We show that $\langle H_1, H_2\rangle \in \overline{\X}$. Let $M$ be a maximal $\overline{\X}$-subgroup of $G$ which contains $H_1$ (such a maximal $\overline{\X}$-subgroup exists by the Zorn's lemma). We prove that $H_2\subseteq M$. Suppose on the contrary that $H_2$ is not included in $M$. Note that as $v(H_1\cap H_2)\neq 1$, we have $H_2\cap M\neq 1$. Also, the assumption $H_2\in \overline{\X}$ implies that
$$
1=\X^{\ast}_0(H_2)\subseteq \X^{\ast}_1(H_2)\subseteq \cdots \subseteq\X^{\ast}_c(H_2)=H_2.
$$
As the first term of this series is included in $M$ but the last one not, there is a $j\in \{ 1, \ldots, c\}$ such that
$$
\X^{\ast}_{j-1}(H_2)\subseteq M, \quad \X^{\ast}_j(H_2)\nsubseteq M.
$$
Recall that $\X^+(H_2, \X^{\ast}_j(H_2))\subseteq \X^{\ast}_{j-1}(H_2)$, so
$$
\X^+(H_2\cap M, \X^{\ast}_j(H_2))\subseteq \X^{\ast}_{j-1}(H_2)\subseteq H_2\cap M.
$$
Hence, by what we have seen above (consider $A=H_2\cap M$ and $B=\X^{\ast}_j(H_2)$), we must have
$$
[v(H_2\cap M), \X^{\ast}_j(H_2)]\subseteq H_2\cap M.
$$
Suppose $z$ is an arbitrary element of $\X^{\ast}_j(H_2)$ which does not belong to $M$. Then $[v(H_2\cap M), z]\subseteq H_2\cap M$. Consequently
$$
1\neq v(H_2\cap M)^z\subseteq H_2\cap M.
$$
On the other hand, we have
$$
1\neq v(H_2\cap M)^z\subseteq (H_2\cap M)^z\subseteq H_2^z\cap M^z,
$$
and this shows that
$$
1\neq v(H_2\cap M)^z\subseteq (H_2\cap M)\cap (H_2^z\cap M^z).
$$
Therefore, $M\cap M^z\neq 1$ and hence, by the malnormality of $M$, we have $z\in M$, a contradiction. Consequently, $H_2$ is included in $M$ and hence $\langle H_1, H_2\rangle \subseteq M$, proving that $\langle H_1, H_2\rangle$ is an $\X$-group. 
\end{proof}

A special case is the variety of center-by-Burnside groups 
$$
\X=\mathrm{Var}([x^m, y]\approx 1).
$$
Suppose that $G$ is a group without nontrivial elements of orders dividing $m$. Applying the above theorem, we conclude that the assumption $G\in \mathrm{CS}\overline{\X}$ implies $G\in \overline{\X}\mathrm{T}$. Note that in the case when $m=1$, the variety $\overline{\X}$ is $\mathfrak{N}_c$, the variety of all nilpotent groups of class at most $c$.  In the next section, we will see that the variety $\X=\mathrm{Var}([x^m, y]\approx 1)$ itself has a similar property; if $G$ does not contain any nontrivial element of order dividing $m$, then the assumption $G\in \CSX$ implies $G\in \XT$.\\

The variety $\X$ in the previous theorem is not the only one with the property $\mathrm{CS}\overline{\X}\to \overline{\X}\mathrm{T}_{\mathfrak{Y}}$. As is shown below, there are more examples.

\begin{theorem}
Let $\X$ be a variety which has an identity of the form
$$
u_1(x_1, \ldots, x_n)y^{-1}v(x_1, \ldots, x_n)yu_2(x_1, \ldots, x_n)\approx 1,
$$
where  $u_1$, $u_2$, $v$ are words in $x_1, \ldots, x_n$ and $y$ is a new variable.
Suppose $\mathfrak{Y}=\mathrm{Var}(v\approx 1)$ and $\overline{\X}=\mathfrak{N}_c(\X)$. Then $\mathrm{CS}\overline{\X}$ implies $\overline{\X}\mathrm{T}_{\mathfrak{Y}}$.
\end{theorem}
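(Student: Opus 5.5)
The plan is to copy the proof of the previous theorem and change only its first step, the algebraic lemma. The lemma to prove is this: if $A,B\leq G$ satisfy $\X^+(A,B)\subseteq A$, then $B^{-1}v(A)B\subseteq A$, and in particular $v(A)^b\subseteq A$ for every $b\in B$. Let $w(x_1,\ldots,x_n,y)=u_1y^{-1}vyu_2$. Take $a_1,\ldots,a_n\in A$, write $a_{n+1}=1$, and let $b\in B$. Then
$$
w_{n+1}^+(a_1,\ldots,a_n,1,b)=w(a_1,\ldots,a_n,1)^{-1}w(a_1,\ldots,a_n,b)\in A .
$$
Write $u_k=u_k(a_1,\ldots,a_n)$ and $v=v(a_1,\ldots,a_n)$. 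All three lie in $A$, and $w(a_1,\ldots,a_n,1)=u_1vu_2\in A$. It follows that $w(a_1,\ldots,a_n,b)=u_1b^{-1}vbu_2\in A$, and multiplying by $u_1^{-1}$ on the left and $u_2^{-1}$ on the right gives $b^{-1}vb\in A$. Since $A$ is a subgroup, conjugation by $b$ sends the subgroup $v(A)$, which is generated by these values, into $A$. We never need $w$ to be a law of $A$ itself; we only use that $A$ is closed under products.

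Next comes the main argument, exactly as before. Let $G$ be a $\mathrm{CS}\overline{\X}$-group, and let $H_1,H_2$ be $\overline{\X}$-subgroups of $G$ with $H_1\cap H_2\notin\mathfrak{Y}$, so that $v(H_1\cap H_2)\neq 1$. Choose, by Zorn's lemma, a maximal $\overline{\X}$-subgroup $M\supseteq H_1$, and suppose $H_2\nsubseteq M$. Using the upper $\X$-central series of $H_2$, pick $j$ with $\X^*_{j-1}(H_2)\subseteq M$ but $\X^*_j(H_2)\nsubseteq M$. Then
$$
\X^+(H_2\cap M,\X^*_j(H_2))\subseteq \X^*_{j-1}(H_2)\subseteq H_2\cap M.
$$
Applying the lemma with $A=H_2\cap M$ and $B=\X^*_j(H_2)$, we get $v(H_2\cap M)^z\subseteq H_2\cap M$ for any $z\in \X^*_j(H_2)\setminus M$.

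The conclusion follows from this. Because $H_1\cap H_2\subseteq H_2\cap M$, we have $v(H_2\cap M)\neq1$. Hence
$$
1\neq v(H_2\cap M)^z\subseteq M\cap M^z ,
$$
and malnormality of $M$ forces $z\in M$, a contradiction. Therefore $H_2\subseteq M$, so $\langle H_1,H_2\rangle\subseteq M$ lies in $\overline{\X}$.

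The only step needing care is the lemma. In particular, the substitution $a_{n+1}=1\in A$ must be legitimate inside the definition of $\X^+(A,B)$, and it is, since $1\in A$. It is also worth checking the convention for $w_i^+$: the new variable multiplies $x_{n+1}$ on the left, so $b\cdot 1=b$ and the computation above is unaffected. Finally, $\X^+(A,B)$ is defined using only identities $w\in\idnX$ of $\X$, and $w$ is one of them, so $\X^+(H_2\cap M,\X^*_j(H_2))\subseteq \X^*_{j-1}(H_2)$ may be applied to this $w$.
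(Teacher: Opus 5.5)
Your proof is correct and is essentially the paper's. It uses the same first-step lemma, that $\X^+(A,B)\subseteq A$ forces $v(A)^b\subseteq A$ for each $b\in B$, followed by the same upper $\X$-central-series and malnormality argument as the preceding theorem; your substitution $a_{n+1}=1$ in $w^+_{n+1}(a_1,\ldots,a_n,1,b)$ is a slightly cleaner version of the paper's computation with a general $a_{n+1}\in A$ (one small fix: the set product $B^{-1}v(A)B\subseteq A$ you first write should read $b^{-1}v(A)b\subseteq A$ for each $b\in B$, which is what you actually prove and use), and you make explicit why $v(H_2\cap M)\neq 1$, which the paper leaves implicit.
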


\begin{proof}
The proof is completely similar to the previous one, however, we need to modify the first step: Suppose 
$$
w(x_1, \ldots, x_n, y)= u_1(x_1, \ldots, x_n)y^{-1}v(x_1, \ldots, x_n)yu_2(x_1, \ldots, x_n)
$$
and $A$ and $B$ are subgroups of a group $G$ with the property $\X^+(A, B)\subseteq A$. Then again,  $B$ normalizes $v(A)$. To see why, let $a_1, \ldots, a_{n+1}\in A$ and $b\in B$. Then we have
$$
w_{n+1}^+(a_1, \ldots, a_n, ba_{n+1})\in A
$$
and this means that 
$$
u_1(a_1, \ldots, a_n)(ba_{n+1})^{-1}v(a_1, \ldots, a_n)(ba_{n+1})u_2(a_1, \ldots, a_n)\in A.
$$
Consequently, $b^{-1}v(a_1, \ldots, a_n)b\in A$, proving $[v(A), B]\subseteq A$. The rest of the proof goes exactly the same as in the previous one. 
\end{proof}

Especially, this includes the Burnside variety $\mathfrak{B}_m=\mathrm{Var}(x^m\approx 1)$ (simply, take $u_1=u_2=1$ and $v=x^m$). As in the example 1.1, if we define the words $w_1=x_2^{-m}(x_1x_2)^m$ and $w_{j+1}=x_{j+1}^{-m}(w_jx_{j+1})^m$, recursively, then for any $c\geq 1$ the variety $\overline{\X}=\mathrm{Var}(w_c\approx 1)$ has the following property:

\begin{corollary}
Let $\overline{\X}=\mathrm{Var}(w_c\approx 1)$ and $G$ be a $\CSX$-group without any non-trivial element  of order dividing $m$. Then $G$ belongs to $\overline{\X}T$.
\end{corollary}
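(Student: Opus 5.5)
Throughout, $\X=\mathfrak{B}_m=\mathrm{Var}(x^m\approx 1)$. By Example 1.1(3), $\overline{\X}=\mathrm{Var}(w_c\approx 1)$ coincides with $\mathfrak{N}_c(\X)$. The statement should therefore follow by combining Proposition 1.3(3), the preceding theorem (with $u_1=u_2=1$, $v=x^m$, so that $\mathfrak{Y}=\mathfrak{B}_m$), and the torsion hypothesis on $G$.

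The argument has three steps.

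First, since $G$ is a $\CSX$-group, Proposition 1.3(3) gives $G\in\mathrm{CS}\overline{\X}$.

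Second, the identity $x^m\approx 1$ of $\mathfrak{B}_m$ has the form $u_1y^{-1}vyu_2\approx 1$ with $u_1=u_2=1$ and $v=x^m$: conjugating $x^m$ by $y$ still yields the identity. The preceding theorem then applies to $\overline{\X}=\mathfrak{N}_c(\X)$ with $\mathfrak{Y}=\mathrm{Var}(x^m\approx 1)$. Hence $G\in\overline{\X}\mathrm{T}_{\mathfrak{Y}}$: whenever $H_1,H_2$ are $\overline{\X}$-subgroups with $H_1\cap H_2\notin\mathfrak{B}_m$, the subgroup $\langle H_1,H_2\rangle$ lies in $\overline{\X}$.

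Third, we show that the relative condition is no restriction here. Let $H_1,H_2$ be $\overline{\X}$-subgroups with $H_1\cap H_2\neq 1$, and pick $1\neq g\in H_1\cap H_2$. If $g^m=1$, the order of $g$ divides $m$, which the hypothesis on $G$ forbids. So $g^m\neq 1$, and therefore $H_1\cap H_2\notin\mathfrak{B}_m$, i.e.\ $v(H_1\cap H_2)\neq 1$. The second step now yields $\langle H_1,H_2\rangle\in\overline{\X}$, which proves $G\in\overline{\X}\mathrm{T}$.

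The only point needing care is the identification $\mathrm{Var}(w_c\approx 1)=\mathfrak{N}_c(\mathfrak{B}_m)$, which we take from Example 1.1(3) as stated earlier in the paper.
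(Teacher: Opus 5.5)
Your proof is correct and takes the same route the paper intends. The paper gives no separate proof: it specializes the preceding theorem to $\mathfrak{B}_m$ with $u_1=u_2=1$ and $v=x^m$, identifies $\mathrm{Var}(w_c\approx 1)$ with $\mathfrak{N}_c(\mathfrak{B}_m)$ via Example 1.1(3), and uses the absence of nontrivial elements of order dividing $m$ to get $v(H_1\cap H_2)\neq 1$. The step from $\CSX$ to $\mathrm{CS}\overline{\X}$ via Proposition 1.3(3), which the paper leaves implicit, you supply correctly.
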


\section{More examples}
There are some cases where the property $\CSX$ implies $\XT$ but only for a restricted class of groups. A trivial example is the case when $G$ is a finite group (or a locally finite group). As is shown in \cite{Omar-Shah2}, under this extra assumption, the group $G$ belongs to $\X$ and so, it is $\XT$. In this section, we introduce more similar cases.

\begin{theorem}
Let $\X=\mathrm{Var}([x, y_1^{p_1}, \ldots, y_k^{p_k}]\approx 1)$ where $y_1, \ldots, y_k$ are (not necessarily distinct) variables, $x\not\in \{y_1, \ldots, y_k\}$, and the exponents $p_1, \ldots, p_k$ are arbitrary non-zero integers. Let $G$ be a torsion-free $\CSX$-group. Then $G$ belongs to $\XT$.
\end{theorem}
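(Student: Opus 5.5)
We follow the scheme of the two previous theorems: fix a maximal $\X$-subgroup $M$ containing $H_1$ and prove $H_2\subseteq M$. Let $G$ be a torsion-free $\CSX$-group and let $H_1,H_2\le G$ be $\X$-subgroups with $H_1\cap H_2\neq 1$. By Zorn's lemma choose a maximal $\X$-subgroup $M$ of $G$ with $H_1\subseteq M$. It suffices to show $H_2\subseteq M$, since then $\langle H_1,H_2\rangle\subseteq M\in\X$.

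To this end, fix $1\neq a\in H_1\cap H_2\subseteq M\cap H_2$ and an arbitrary $h\in H_2$; we show $h\in M$. The key observation is that in $H_2$ the law gives
$$
[a, h^{p_1}, \ldots, h^{p_k}]=1 ,
$$
obtained by substituting $x=a$ and every $y_i=h$ (allowed since $x\notin\{y_1,\ldots,y_k\}$). Put $c_0=a$ and $c_{j}=[c_{j-1},h^{p_j}]$. Let $r$ be the least index with $c_r=1$; then $r\ge 1$ because $a\neq 1$. Hence $c_{r-1}\neq 1$ commutes with $h^{p_r}$.

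We then push this down to $M$ by induction, using malnormality together with torsion-freeness. The cleanest way is to show by induction on $j$ that the relevant $c_j$ lie in $M$; the difficulty is that $c_1=[a,h^{p_1}]$ need not lie in $M$ before we know $h\in M$. I would therefore argue from the other end. The subgroup $\langle c_{r-1}, h^{p_r}\rangle$ is abelian, hence lies in $\X$. The torsion-free $\CSX$ hypothesis then gives a $\mathrm{CT}$-like transitivity: any two nontrivial commuting elements lie in a common maximal $\X$-subgroup. Indeed, if $g\in M'$ (maximal $\X$) and $[g,t]=1$ with $t\notin M'$, then $g\in M'\cap M'^t$, contradicting malnormality. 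So for $g\neq1$, the centralizer $C(g)$ lies in every maximal $\X$-subgroup containing $g$.

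The induction then runs as follows. Let $M_{r-1}$ be a maximal $\X$-subgroup containing $c_{r-1}$; the observation gives $h^{p_r}\in M_{r-1}$. Next, $c_{r-1}=c_{r-2}^{-1}c_{r-2}^{h^{p_{r-1}}}$. I would try to show that some nontrivial power of $h$ normalizes, and hence by malnormality lies in, the maximal $\X$-subgroup $M'$ containing $a$ (note $M'=M$ is forced once $a\in M$, provided $h^{q}$ has been placed in a subgroup meeting $M$ nontrivially). The fallback is to work entirely inside $H_2\cap M$ rather than element-wise. Set $A=H_2\cap M\neq1$, and note that $a\in A$ and $H_2\in\X$ satisfies the law. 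Then the aim is to prove that for $h\in H_2\setminus M$ some nontrivial element of $A$ is conjugated by a nonzero power $h^{q}$ into $A$. Malnormality forces $h^{q}\in M$. Then $h^q\in M$, and torsion-freeness combined with malnormality of $M$ (namely $1\neq h^q\in M\cap M^h$) gives $h\in M$.

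So the overall chain is: law $\Rightarrow$ some power $h^{q}$ ($q\neq0$) lies in $M$ $\Rightarrow$ $h\in M$, since $h^q\neq1$ by torsion-freeness and $h^q\in M\cap M^h$. The main obstacle is the first implication, namely extracting from $[a,h^{p_1},\ldots,h^{p_k}]=1$ a nontrivial element of $M$ fixed or moved into $M$ by a power of $h$. I would attempt it by descending induction on the commutator length $k$, using the centralizer fact above at each stage. At stage $r$, $h^{p_r}$ lies in the maximal $\X$-subgroup of $c_{r-1}$, and I would try to identify that subgroup with $M$ by showing that $c_{r-1}$ and $a$ generate, together with the relevant powers of $h$, an $\X$-solvable subgroup; Proposition 1.3(1) would then place it inside a single $\X$-subgroup.
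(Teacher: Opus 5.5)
Your proposal has a genuine gap. The step you yourself flag as the main obstacle is passing from the law to $h\in M$, or to $h^q\in M$ for some $q\neq 0$. You never carry it out, and with the substitution $x=a$, $y_1=\cdots=y_k=h$ it cannot be done along the lines you sketch.

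In your chain $c_j=[c_{j-1},h^{p_j}]$ the \emph{conjugating} elements are powers of $h$. So peeling $c_j=c_{j-1}^{-1}h^{-p_j}c_{j-1}h^{p_j}$ via malnormality needs $h^{p_j}$ to lie in the maximal $\X$-subgroup in question. Everything you extract therefore lives in a maximal $\X$-subgroup containing $h$, such as your $M_{r-1}$. The most you can conclude is that $a$ and $h$ lie in a common maximal $\X$-subgroup, which is already obvious since $\langle a,h\rangle\le H_2\in\X$.

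To relate this to the particular $M\supseteq H_1$, you invoke ``$M'=M$ is forced once $a\in M$''. That says a nontrivial element lies in a unique maximal $\X$-subgroup. This statement is equivalent to $\XT$ itself and does not follow from malnormality alone. The appeal to Proposition~1.3(1) gains nothing for the same reason: all the elements involved already lie in the $\X$-group $H_2$. Your ``fallback'' also never produces the required nontrivial $g\in H_2\cap M$ and $q\neq 0$.

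The missing idea is to swap the roles of the variables. Put $x=h$ and $y_1=\cdots=y_k=a$, which is legitimate since $x\notin\{y_1,\ldots,y_k\}$. This gives $[h,a^{p_1},\ldots,a^{p_k}]=1\in M$, and now the conjugating elements are powers of $a$, which are known to lie in $M$.

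Write $d_0=h$ and $d_j=[d_{j-1},a^{p_j}]$. Suppose $d_j=d_{j-1}^{-1}a^{-p_j}d_{j-1}a^{p_j}\in M$. Then $d_{j-1}^{-1}a^{-p_j}d_{j-1}\in M$, so $1\neq a^{p_j}\in M\cap M^{d_{j-1}^{-1}}$; this element is nontrivial because $G$ is torsion-free and $p_j\neq 0$. Malnormality then gives $d_{j-1}\in M$. Descending from $d_k=1$ to $d_0=h$ yields $h\in M$, hence $H_2\subseteq M$ and $\langle H_1,H_2\rangle\in\X$.

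This is exactly the paper's argument. No centralizer lemma, solvability argument, or detour through powers of $h$ is needed.
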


\begin{proof}
Suppose $A$ and $B$ are two $\X$-subgroups of $G$ such that $A\cap B\neq 1$. Let $M$ be a maximal $\X$-subgroup which contains $A$. Suppose $a
\in A\cap B$ is a nontrivial element and $b\in B$. Note that
$$
[b, a^{p_1}, \ldots, a^{p_k}]=1
$$
which is an element of $M$. Hence
$$
[b, a^{p_1}, \ldots, a^{p_{k-1}}]^{-1}a^{-p_k}[b, a^{p_1}, \ldots, a^{p_{k-1}}]a^{p_k}\in M.
$$
As $a\in M$ and $G$ is torsion-free, this implies that
$$
1\neq a^{p_k}\in M\cap M^{[b, a^{p_1}, \ldots, a^{p_{k-1}}]^{-1}}
$$
and consequently, by the malnormality of $M$
$$
[b, a^{p_1}, \ldots, a^{p_{k-1}}]\in M.
$$
Repeating a similar argument, we obtain
$$
[b, a^{p_1}, \ldots, a^{p_{k-2}}]\in M,
$$
and hence, finally $b\in M$. This shows that $B\subseteq M$ and therefore, $\langle A, B\rangle$ belongs to $\X$.
\end{proof}

A special case of the above theorem is the case of the $k$-Engel variety $\X=\mathrm{Var}([x,_k y]\approx 1)$. In this case, we can drop the assumption of being torsion-free.

\begin{theorem}
Let $\X=\mathrm{Var}([x,_k y]\approx 1)$ be the variety of $k$-Engel groups. Then $\CSX$ implies $\XT$.
\end{theorem}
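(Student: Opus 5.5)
We follow the proof of the previous theorem with $p_1=\cdots=p_k=1$ and $y_1=\cdots=y_k=y$. The only place torsion-freeness was used was to guarantee $a^{p_k}\neq 1$; with exponents equal to $1$ the element conjugated is $a$ itself, which is nontrivial by choice. So the plan is to repeat the descending argument verbatim.

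Let $A,B$ be $\X$-subgroups of the $\CSX$-group $G$ with $A\cap B\neq 1$. Let $M$ be a maximal $\X$-subgroup containing $A$; it exists by Zorn's lemma, since $\X$ is a variety and hence closed under unions of chains. Fix $1\neq a\in A\cap B$ and an arbitrary $b\in B$. Since $B$ is $k$-Engel, $[b,_k a]=1\in M$.

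Put $c_j=[b,_j a]$, so $c_0=b$. We prove by downward induction on $j$ that $c_j\in M$ for $j=k,k-1,\ldots,0$. The base case $c_k=1$ is clear. Suppose $c_j\in M$ with $j\geq 1$. Then
\[
c_j=c_{j-1}^{-1}a^{-1}c_{j-1}a\in M,
\]
and since $a\in M$ this gives $c_{j-1}^{-1}a^{-1}c_{j-1}\in M$. Hence
\[
1\neq a^{-1}\in M\cap M^{c_{j-1}^{-1}}.
\]
Malnormality of $M$ then forces $c_{j-1}^{-1}\in M$, that is, $c_{j-1}\in M$.

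Reaching $j=0$ gives $b\in M$. Since $b\in B$ was arbitrary, $B\subseteq M$, and therefore $\langle A,B\rangle\leq M\in\X$. As $\X$ is closed under subgroups, $\langle A,B\rangle\in\X$, so $G\in\XT$.

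There is no real obstacle here. The only points to check are that $a\neq 1$ is what is needed, so no torsion hypothesis enters, and that the inductive step only uses $a\in M$ together with malnormality.
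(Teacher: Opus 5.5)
Your proposal is correct and matches the paper's proof: you fix $1\neq a\in A\cap B$, start from $[b,_k a]=1\in M$, and peel off one commutator at a time via $1\neq a^{-1}\in M\cap M^{[b,_{j-1}a]^{-1}}$ and malnormality of $M$, ending with $b\in M$. Your remark that no torsion hypothesis is needed, because the conjugated element is $a$ itself rather than a power of $a$, is exactly why this special case holds without torsion-freeness.
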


\begin{proof}
Suppose that $G$ is $\CSX$ and $A, B\leq G$ be $\X$-subgroups with a nontrivial intersection. Let $M$ be a maximal $\X$-subgroup which contains $A$ and $a\in A\cap B$ be a nontrivial element. Suppose  $b\in B$ is an arbitrary element. We have  $[b,_k a]=1\in M$ and hence
$$
[b,_{k-1} a]^{-1}a^{-1}[b,_{k-1} a]a\in M
$$
and therefore
$$
1\neq a^{-1}\in M\cap M^{[b,_{k-1} a]^{-1}}.
$$
Then, the malnormality of $M$ implies $[b,_{k-1} a]\in M$ and again repeating the same argument, at the end we obtain $b\in M$.
\end{proof}


\begin{thebibliography}{99}










\bibitem{Teague}
Teague T. K. {\it On marginal subgroups and their generalizations}. Ph.D. thesis, Michigan State University, 1971.

\bibitem{Omar-Shah2}
Al-Raisi O., Shahryari M. {\it On $\mathfrak{X}$-transitive groups and conjugate separable $\mathfrak{X}$-subgroups}. 2026. https://doi.org/10.48550/arXiv.2601.00746

\bibitem{Omar-Shah}
Al-Raisi O., Shahryari M. {\it New classes of groups which are equational domains}. Bull. Iran. Math. Soc. {\bf 51}, Article no. 43, 2025.

\bibitem{Shah}
Shahryari M. {\it On conjugate separability of nilpotent subgroups}. Journal of Group Theory, {\bf 27}(6), 2024, pp. 1171-1185.

\end{thebibliography}
\end{document}